\newcommand{\bdis}{\begin{displaymath}}
\newcommand{\edis}{\end{displaymath}}
\newcommand{\be}{\begin{equation}}
\newcommand{\ee}{\end{equation}}
\newcommand{\mcal}{\mathcal}
\newcommand{\vp}{\varphi}
\newcommand{\vt}{\vartheta}
\newcommand{\mT}{\mathring{T}}
\newcommand{\zf}{\zeta\left(\frac{1}{2}+it\right)}
\newtheorem{lemma}[]{Lemma}
\theoremstyle{definition}
\newtheorem{cor}[]{Corollary}
\theoremstyle{remark}
\newtheorem{remark}[]{Remark}
\newtheorem*{mydef11}{{\bf Theorem 1}}
\newtheorem*{mydef12}{{\bf Theorem 2}}
\numberwithin{equation}{section}
\begin{document}

\title{New consequences of the Riemann-Siegel formula and a law of asymptotic equality of signum-areas of $Z(t)$ function}

\author{Jan Moser}

\address{Department of Mathematical Analysis and Numerical Mathematics, Comenius University, Mlynska Dolina M105, 842 48 Bratislava, SLOVAKIA}

\email{jan.mozer@fmph.uniba.sk}

\keywords{Riemann zeta-function}

\begin{abstract}
In this paper we obtain the first mean-value theorems for the function $Z(t)$ on some disconnected sets. Next, we obtain a geometric law that
controls chaotic behavior of the graph of the function $Z(t)$. This paper is the English version of the papers \cite{8} and \cite{9}, except
of the Appendix that connects our results with the theory of Jacob's ladders, namely new third-order formulae have been obtained.
\end{abstract}

\maketitle

\section{Introduction}

\subsection{}

First of all we define the following collection of sequences

\bdis
\{ t_\nu(\tau)\},\ \nu=1,2,\dots,\ \tau\in [-\pi,\pi]
\edis
by the equation
\be \label{1.1}
\vt[t_\nu(\tau)]=\pi\nu+\tau;\ t_\nu(0)=t_\nu,
\ee
where (see \cite{12}, pp. 79, 329)
\be \label{1.2}
\begin{split}
 & Z(t)=e^{i\vt(t)}\zf , \\
 & \vt(t)=-\frac t2\ln\pi+\text{Im}\ln\Gamma\left(\frac 14+i\frac t2\right).
\end{split}
\ee
If we use this collection together with the Riemann-Siegel formula (see \cite{10}, p. 60, comp. \cite{12}, p. 79)
\be \label{1.3}
Z(t)=2\sum_{n\leq\bar{t}}\frac{1}{\sqrt{n}}\cos\{\vt(t)-t\ln n\}+\mcal{O}(t^{-1/4}),\ \bar{t}=\sqrt{\frac{t}{2\pi}},
\ee
then we obtain a new kind of mean-value theorems for the function $Z(t)$ on some disconnected sets. These formulae contain new information
about the distribution of positive and negative values of the function $Z(t)$ on corresponding sets.

\subsection{}

The main reason to introduce mentioned disconnected sets lies in the study of the internal structure of the Hardy-Littlewood estimate (1918),
(see \cite{1}, p. 178)
\be \label{1.4}
\int_T^{T+\Omega} Z(t){\rm d}t=o(\Omega)
\ee
for corresponding $\Omega$. \\

Let us remind the following about the estimate (\ref{1.4}). Hardy and Littlewood have obtained the following estimate (see \cite{1}, p. 178)
\be \label{1.5}
\int_T^{T+M} x(t){\rm d}t=\mcal{O}(T^\delta),\ M=T^{1/4+\epsilon},\quad \epsilon>0,\ \delta>0,
\ee
where (see \cite{1}, p. 177)
\bdis
\begin{split}
 & f(s)=\pi^{-s}e^{-\frac 12(s-\frac 14)\pi i}\Gamma(s)\zeta(2s),\quad s=\sigma+it, \\
 & x(t)=f\left(\frac 14+it\right).
\end{split}
\edis
Since (see \cite{1}, p. 178)
\bdis
x(t)=-\frac{e^{\frac 12\pi t}\Xi(2t)}{\frac 14+4t^2},
\edis
and (see \cite{12}, p. 68, (4.12.2); $\sigma=\frac 14$)
\bdis
\begin{split}
 & Z(t)=-2\pi^{1/4}\frac{\Xi(t)}{\left(\frac 14+t^2\right)\left|\Gamma\left(\frac 14+i\frac t2\right)\right|}, \\
 & \left|\Gamma\left(\frac 14+i\frac t2\right)\right|=
 t^{-1/4}e^{-\frac 12\pi t}\sqrt{2\pi}\left\{ 1+\mcal{O}\left(\frac 1t\right)\right\},
\end{split}
\edis
then we have the expression
\be \label{1.6}
x(t)=2^{1/2}\pi^{1/4}t^{-1/4}Z(2t)\left\{ 1+\mcal{O}\left(\frac 1t\right)\right\}.
\ee
Hence, we obtain from (\ref{1.5}) by (\ref{1.6}) the estimate
\be \label{1.7}
\int_T^{T+M} Z(2t){\rm d}t=\mcal{O}(T^{1/4+\delta}),
\ee
and, consequently
\bdis
\begin{split}
 & \int_{2T}^{2T+2M} Z(t){\rm d}t=\mcal{O}(T^{1/4+\delta});\quad 2T\to\bar{T},\ 2M\to \bar{M} \\
 & \int_{\bar{T}}^{\bar{T}+\bar{M}}Z(t){\rm d}t=\mcal{O}(\bar{T}^{1/4+\delta}),
\end{split}
\edis
i. e. we have (\ref{1.4}).

\section{New asymptotic formulae for the function $Z(t)$}

\subsection{}

Let
\be \label{2.1}
\begin{split}
 & G_1(x)=G_1(x;T,H)= \\
 & = \bigcup_{T\leq t_{2\nu}\leq T+H}\{ t:\ t_{2\nu}(-x)<t<t_{2\nu}(x)\},\ 0<x\leq\frac{\pi}{2}, \\
 & G_2(y)=G_2(y;T,H)= \\
 & = \bigcup_{T\leq t_{2\nu+1}\leq T+H}\{ t:\ t_{2\nu+1}(-y)<t<t_{2\nu+1}(y)\},\ 0<y\leq \frac{\pi}{2},
\end{split}
\ee
where
\be \label{2.2}
H=T^{1/6+2\epsilon};\ T^{1/6}\psi^2\ln^5T\to T^{1/6+\epsilon}.
\ee

The following theorem holds true.

\begin{mydef11}
\be \label{2.3}
\begin{split}
 & \int_{G_1(x)}Z(t){\rm d}t=\frac{2}{\pi}H\sin x+\mcal{O}(xT^{1/6+\epsilon}), \\
 & \int_{G_2(y)}Z(t){\rm d}t=-\frac{2}{\pi}H\sin y+\mcal{O}(yT^{1/6+\epsilon}), \\
 & x,y\in \left(\left. 0,\frac \pi 2\right]\right.
\end{split}
\ee
\end{mydef11}

\begin{remark}
By the formula (\ref{2.3}) we have expressed the internal structure of the Hardy-Littlewood integral
\bdis
\int_T^{T+H} Z(t){\rm d}t ,
\edis
i. e. the decomposition of this integral into its parts.
\end{remark}

\begin{remark}
We will assume that
\be \label{2.4}
G_1(x),G_2(y)\subset [T,T+H]
\ee
since we may put
\bdis
G_1(x)\cap [T,T+H]=\bar{G}_1(x)\to G_1(x), \dots
\edis
\end{remark}

\begin{remark}
The existence of the odd zero of the function
\bdis
Z(t),\ t\in \left[ T,T+T^{1/6+2\epsilon}\right]
\edis
follows directly from our formulae (\ref{2.3}).
\end{remark}

\subsection{}

Since (see (\ref{1.1}))
\be \label{2.5}
\begin{split}
 & \vt[t_{2\nu}(x)]-\vt[t_{2\nu}(-x)]=2x, \\
 & \vt[t_{2\nu+1}(y)]-\vt[t_{2\nu+1}(-y)]=2y,
\end{split}
\ee
we obtain (see \cite{1}, p. 102; \cite{3}, (42))
\be \label{2.6}
\begin{split}
 & t_{2\nu}(x)-t_{2\nu}(-x)=\frac{4x}{\ln\frac{T}{2\pi}}+\mcal{O}\left(\frac{xH}{T\ln^2T}\right), \\
 & t_{2\nu+1}(y)-t_{2\nu+1}(-y)=\frac{4y}{\ln\frac{T}{2\pi}}+\mcal{O}\left(\frac{yH}{T\ln^2T}\right).
\end{split}
\ee
Next we have (see (\ref{2.1}), (\ref{2.6}) and \cite{4}, (23))
\be \label{2.7}
m\{ G_1(x)\}=\frac x\pi H+\mcal{O}(x),\quad m\{ G_2(y)\}=\frac y\pi H+\mcal{O}(y),
\ee
where $m\{ G_1(x)\},m\{ G_2(y)\}$ stand for measures of the corresponding sets. Following (\ref{2.3}) we obtain

\begin{cor}
\be \label{2.8}
\begin{split}
 & \frac{1}{m\{ G_1(x)\}}\int_{G_1(x)}Z(t){\rm d}t\sim 2\frac{\sin x}{x}, \\
 & \frac{1}{m\{ G_2(y)\}}\int_{G_2(y)}Z(t){\rm d}t\sim -2\frac{\sin y}{y}.
\end{split}
\ee
\end{cor}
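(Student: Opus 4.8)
The plan is to divide the asymptotic formula (\ref{2.3}) for $\int_{G_1(x)}Z(t){\rm d}t$ by the measure estimate (\ref{2.7}) for $m\{G_1(x)\}$ and to check that the remainder terms are of lower order than the corresponding main terms. First I would record, using (\ref{2.2}), that $H=T^{1/6+2\epsilon}$, so that $T^{1/6+\epsilon}=H\,T^{-\epsilon}$ and hence the remainder $\mcal{O}(xT^{1/6+\epsilon})$ in the first line of (\ref{2.3}) equals $\mcal{O}(xH\,T^{-\epsilon})$. Since $\frac{2}{\pi}\leq \frac{\sin x}{x}\leq 1$ on $\left(0,\frac{\pi}{2}\right]$, i.e. $\sin x$ and $x$ are of the same order there, the main term $\frac{2}{\pi}H\sin x$ dominates this remainder, giving
\[
\int_{G_1(x)}Z(t){\rm d}t=\frac{2}{\pi}H\sin x\left(1+\mcal{O}\left(T^{-\epsilon}\right)\right).
\]

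Next I would treat the measure: from (\ref{2.7}), $m\{G_1(x)\}=\frac{x}{\pi}H+\mcal{O}(x)=\frac{x}{\pi}H\left(1+\mcal{O}(H^{-1})\right)$, and since $H\to\infty$ this equals $\frac{x}{\pi}H(1+o(1))$, with non-vanishing main term for every fixed $x\in\left(0,\frac{\pi}{2}\right]$. Dividing the two displays and using that $\frac{A(1+o(1))}{B(1+o(1))}=\frac{A}{B}(1+o(1))$ when $B\neq 0$, I obtain
\[
\frac{1}{m\{G_1(x)\}}\int_{G_1(x)}Z(t){\rm d}t=\frac{\frac{2}{\pi}H\sin x}{\frac{x}{\pi}H}(1+o(1))=\frac{2\sin x}{x}(1+o(1)),
\]
which is the first line of (\ref{2.8}). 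The second line follows in the same way with $G_1,x$ replaced by $G_2,y$ and the sign of the main term reversed, using the second lines of (\ref{2.3}) and (\ref{2.7}).

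There is essentially no obstacle here: the corollary is an immediate consequence of Theorem 1 together with the measure asymptotics (\ref{2.7}) already derived from (\ref{2.6}) and \cite{4}. The only point demanding a word of care is confirming that the remainder $\mcal{O}(xT^{1/6+\epsilon})$ in (\ref{2.3}) is genuinely smaller than the main term $\frac{2}{\pi}H\sin x$; this is exactly where the saving $T^{-\epsilon}$, coming from $H=T^{1/6+2\epsilon}$, combined with $\sin x\asymp x$ on $\left(0,\frac{\pi}{2}\right]$, is used. If one wanted the relation to hold uniformly while $x\to 0$ together with $T$, one would in addition have to keep $x$ bounded below by a (small) negative power of $T$ so that both $o(1)$ terms remain $o(1)$; for each fixed $x$ no such restriction is needed.
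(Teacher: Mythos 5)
Your proposal is correct and follows exactly the paper's route: the paper states the corollary as an immediate consequence of Theorem 1 (formula (\ref{2.3})) together with the measure asymptotics (\ref{2.7}), which is precisely the division you carry out, with the observation that the error $\mcal{O}(xT^{1/6+\epsilon})=\mcal{O}(xHT^{-\epsilon})$ is of lower order than the main term since $H=T^{1/6+2\epsilon}$ and $\sin x\asymp x$ on $\left(0,\frac{\pi}{2}\right]$. Your remark about uniformity in $x$ is a reasonable extra precaution not addressed in the paper, which treats $x$ as fixed.
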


Since (see (\ref{1.1}), (\ref{2.1}))
\bdis
G_1(x)\cap G_2(y)=\emptyset ; \quad t_{2\nu}\left(\frac \pi 2\right)=t_{2\nu+1}\left(-\frac \pi 2\right) ,
\edis
we obtain from (\ref{2.3}) the following

\begin{cor}
\be \label{2.9}
\begin{split}
 & \int_{G_1(x)\cup G_2(y)} Z(t){\rm d}t=\\
 & \left\{\begin{array}{lcr} \frac{2}{\pi}(\sin x-\sin y)H+\mcal{O}\{ (x+y)T^{1/6+\epsilon}\} & , & x\not=y \\
 \mcal{O}(x T^{1/6+\epsilon}) & , & x=y. \end{array} \right.
\end{split}
\ee
\end{cor}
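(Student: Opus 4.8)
The plan is to deduce (\ref{2.9}) directly from Theorem 1, using only the elementary fact that $G_1(x)$ and $G_2(y)$ are disjoint together with additivity of the integral. First I would recall from the displayed line preceding the statement that $G_1(x)\cap G_2(y)=\emptyset$ — this is immediate from the definitions (\ref{2.1}), since $G_1(x)$ is built from intervals around the even-indexed points $t_{2\nu}$ and $G_2(y)$ from intervals around the odd-indexed ones $t_{2\nu+1}$, and for $x,y\le\pi/2$ these intervals interlace without overlap, the extreme case being the single shared endpoint $t_{2\nu}(\pi/2)=t_{2\nu+1}(-\pi/2)$, which has measure zero and does not affect the integral. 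Hence
\[
\int_{G_1(x)\cup G_2(y)} Z(t)\,{\rm d}t=\int_{G_1(x)} Z(t)\,{\rm d}t+\int_{G_2(y)} Z(t)\,{\rm d}t .
\]

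Next I would substitute the two asymptotic formulae from (\ref{2.3}):
\[
\int_{G_1(x)} Z(t)\,{\rm d}t=\frac{2}{\pi}H\sin x+\mcal{O}(xT^{1/6+\epsilon}),\qquad
\int_{G_2(y)} Z(t)\,{\rm d}t=-\frac{2}{\pi}H\sin y+\mcal{O}(yT^{1/6+\epsilon}).
\]
Adding these gives
\[
\int_{G_1(x)\cup G_2(y)} Z(t)\,{\rm d}t=\frac{2}{\pi}(\sin x-\sin y)H+\mcal{O}\{(x+y)T^{1/6+\epsilon}\},
\]
which is precisely the first branch of (\ref{2.9}). For the case $x=y$, the main terms $\frac{2}{\pi}H\sin x$ and $-\frac{2}{\pi}H\sin y$ cancel exactly, leaving only the error term $\mcal{O}\{(x+y)T^{1/6+\epsilon}\}=\mcal{O}(xT^{1/6+\epsilon})$, which is the second branch.

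There is essentially no obstacle here: the corollary is a formal consequence of Theorem 1 plus disjointness, and the only point requiring a word of care is the shared boundary point $t_{2\nu}(\pi/2)=t_{2\nu+1}(-\pi/2)$, which I would dispose of by noting it contributes a null set. The substantive content — the cancellation phenomenon when $x=y$, showing that the positive and negative signum-areas balance to leading order — is inherited directly from the sign structure already present in (\ref{2.3}), so the proof amounts to reading off that cancellation.
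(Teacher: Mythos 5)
Your proposal is correct and follows exactly the paper's route: the corollary is obtained by adding the two formulae of Theorem 1 (\ref{2.3}) over the disjoint sets $G_1(x)$ and $G_2(y)$ (disjointness and the shared endpoint $t_{2\nu}(\pi/2)=t_{2\nu+1}(-\pi/2)$ being noted just before the statement), with the main terms cancelling when $x=y$. Nothing further is needed.
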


\begin{remark}
Since (see (\ref{2.4}))
\bdis
m\{ G_1(\pi/2)\}+m\{ G_2(\pi/2)\}=H,
\edis
we have (see (\ref{2.2}), (\ref{2.9}))
\bdis
\int_{G_1(\pi/2)\cup G_2(\pi/2)}Z(t){\rm d}t=\int_T^{T+H} Z(t){\rm d}t=o(H),
\edis
comp. (\ref{1.4}). Consequently, the estimate of the Hardy-Littlewood type is direct connects of the asymptotic formulae (\ref{2.3}).
\end{remark}

\section{Law of asymptotic equality of signum-areas of $Z(t)$ function}

\subsection{}

Let us point out the chaotic behavior of the graph of function $Z(t)$ at $t\to\infty$, (comp., for example, the graph of $Z(t)$ in the
neighborhood of the first Lehmer pair of the zeroes, \cite{2}, pp. 296, 297). In this direction we obtain a new law that controls this
chaotic behavior. \\

Let
\be \label{3.1}
\begin{split}
 & G_1^+(x)=\{ t:\ t\in G_1(x), Z(t)>0\}, \\
 & G_1^-(x)=\{ t:\ t\in G_1(x), Z(t)<0\}, \\
 & G_2^+(x)=\{ t:\ t\in G_2(x), Z(t)>0\}, \\
 & G_2^-(x)=\{ t:\ t\in G_2(x), Z(t)<0\}, \\
 & G_3(x)=\{ t:\ t\in G_1(x), Z(t)=0\}, \\
 & G_4(x)=\{ t:\ t\in G_2(x), Z(t)=0\} .
\end{split}
\ee
Of course,
\bdis
m\{ G_3(x)\}=m\{ G_4(x)\}=0.
\edis
The following Theorem holds true.

\begin{mydef12}
\be \label{3.2}
\begin{split}
 & \int_{G_1^+(x)\cup G_2^+(x)}Z(t){\rm d}t\sim -\int_{G_1^-(x)\cup G_2^-(x)}Z(t){\rm d}t, \\
 & T\to\infty,\ x\in \left(\left. 0,\frac{\pi}{2}\right]\right. .
\end{split}
\ee
\end{mydef12}

Let
\be \label{3.3}
\begin{split}
 & D^+(x)=\{ (t,u):\ t\in G_1^+(x)\cup G_2^+(x),\ 0<u\leq Z(t)\}, \\
 & D^-(x)=\{ (t,u):\ t\in G_1^-(x)\cup G_2^-(x),\ Z(t)\leq u<0\} .
\end{split}
\ee

\begin{remark}
The asymptotic equality (\ref{3.2}) expresses the following geometric law
\be \label{3.3}
m\{ D^+(x)\}\sim m\{ D^-(x)\},\quad T\to\infty,\ x\in \left(\left. 0,\frac{\pi}{2}\right.\right],
\ee
where $D^+(x),D^-(x)$ (see (\ref{3.3})) are the first mentioned signum-sets.
\end{remark}

\begin{remark}
It is just the geometric law (\ref{3.3}) that controls the chaotic behavior of the graph of function $Z(t)$.
\end{remark}

\section{Lemma 1}

The following lemma holds true.

\begin{lemma}
\be \label{4.1}
\sum_{T\leq t_\nu(\tau)\leq T+H} Z[t_\nu(\tau)]=\mcal{O}(T^{1/6+\epsilon}),\ \tau\in [-\pi,\pi].
\ee
\end{lemma}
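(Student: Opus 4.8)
The plan is to evaluate the sum $\sum_{T\le t_\nu(\tau)\le T+H} Z[t_\nu(\tau)]$ by substituting the Riemann--Siegel formula (\ref{1.3}) at the points $t=t_\nu(\tau)$, where by definition (\ref{1.1}) we have $\vt[t_\nu(\tau)]=\pi\nu+\tau$. This identity is the key simplification: the phase $\vt(t)$ in the cosine becomes an explicit linear function of $\nu$, so that
\bdis
Z[t_\nu(\tau)]=2\sum_{n\le\bar t_\nu}\frac{1}{\sqrt n}\cos\{\pi\nu+\tau-t_\nu(\tau)\ln n\}+\mcal{O}(T^{-1/4}),
\edis
and $\cos\{\pi\nu+\tau-t_\nu\ln n\}=(-1)^\nu\cos\{\tau-t_\nu\ln n\}$. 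Summing over the $\mcal{O}(H\ln T)$ admissible indices $\nu$ and interchanging the order of summation, I would split off the $n=1$ term (giving $2\sum_\nu(-1)^\nu\cos\tau$, an alternating sum over consecutive integers $\nu$, hence $\mcal{O}(1)$ after telescoping) and treat the tail $n\ge 2$ as an exponential sum $\sum_\nu (-1)^\nu e^{-it_\nu(\tau)\ln n}$ weighted by $n^{-1/2}$.

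The main work is then to bound, for each fixed $n\ge 2$, the sum $S_n=\sum_{T\le t_\nu\le T+H}(-1)^\nu e^{it_\nu\ln n}$ (and its conjugate). Here I would use the regularity of the sequence $\{t_\nu\}$: from $\vt[t_\nu(\tau)]=\pi\nu+\tau$ and $\vt'(t)\sim\frac12\ln\frac{t}{2\pi}$ one gets $t_{\nu+1}-t_\nu\sim 2\pi/\ln\frac{T}{2\pi}$ together with good control of the higher differences (this is exactly the kind of estimate invoked in (\ref{2.6}) via \cite{1}, p.~102 and \cite{3}). Writing the phase $t_\nu\ln n$ as a smooth function of $\nu$ and combining the alternating sign $(-1)^\nu$ with the exponential, I would apply van der Corput's method / partial summation (or the standard lemma on $\sum e^{2\pi i f(\nu)}$ with $f''$ controlled) to obtain a saving for each $n$; then sum over $2\le n\le\bar T\asymp T^{1/2}$ against the weight $n^{-1/2}$. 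The target bound $\mcal{O}(T^{1/6+\epsilon})$, together with $H=T^{1/6+2\epsilon}$, indicates that the per-$n$ estimate plus the $\sum n^{-1/2}$ over the range $n\le\sqrt T$ must combine to something like $T^{1/6+\epsilon}$; the error $\mcal{O}(T^{-1/4})$ from Riemann--Siegel contributes $\mcal{O}(H\ln T\cdot T^{-1/4})=\mcal{O}(T^{-1/12+\epsilon})$, which is negligible.

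I expect the main obstacle to be the exponential-sum estimate for $S_n$ uniformly in $n$, since $t_\nu$ is only implicitly defined through $\vt$; one must pass from "$\nu$ indexes $t_\nu$" to "a smooth function of $\nu$" with enough derivative control, i.e. quantify $\frac{d^k}{d\nu^k}t_\nu(\tau)$ via the inverse function of $\vt$. The alternating factor $(-1)^\nu=e^{i\pi\nu}$ should be absorbed into the phase so that van der Corput applies to $\nu\mapsto \pi\nu + t_\nu\ln n$; the condition (\ref{2.2}) on $H$ and the definition of $\psi$ are presumably tuned precisely so that the resulting bounds close. A minor technical point is handling the $\nu$-dependence of the cutoff $\bar t_\nu=\sqrt{t_\nu/2\pi}$ in the inner sum over $n$, but since $\bar t_\nu$ varies by only $\mcal{O}(H/\sqrt T)$ across the range this can be dealt with by a standard truncation argument, collecting the boundary terms into the admissible error.
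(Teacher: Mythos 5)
Your opening reduction coincides with the paper's: substituting the Riemann--Siegel formula at $t=t_\nu(\tau)$ and using $\vt[t_\nu(\tau)]=\pi\nu+\tau$ gives exactly the paper's (4.4), the $n=1$ term telescopes to $O(1)$, and the $O(T^{-1/4})$ error summed over the $O(H\ln T)$ admissible $\nu$ is negligible. The gap is in your main step: you propose to bound $S_n=\sum_{T\le t_\nu\le T+H}(-1)^\nu e^{it_\nu(\tau)\ln n}$ for each fixed $n$ by van der Corput in the $\nu$-variable and then to sum $n^{-1/2}|S_n|$ over $2\le n<P_0=\sqrt{T/2\pi}$. This cannot reach $O(T^{1/6+\epsilon})$. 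The phase $\pi\nu+t_\nu(\tau)\ln n$ is essentially linear in $\nu$: the step is $\pi+\frac{2\pi\ln n}{\ln(T/2\pi)}+\mcal{O}\left(\frac{H\ln n}{T\ln^2T}\right)$, and the curvature is so small that the second-derivative test is worse than trivial (its term $\lambda^{-1/2}$ is of order $T^{1/2}$), while the first-derivative (Kusmin--Landau) test gives only $|S_n|\ll \ln T/\ln(P_0/n)$, i.e.\ $O(1)$ for $n$ bounded away from $P_0$ and worse near $P_0$. Moreover this order of magnitude is genuine: the explicit evaluation of such alternating sums (the paper's (54')) shows boundary terms of size $\asymp 1$ for typical $n$. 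Hence any argument that takes absolute values in $n$ is stuck at $\sum_{n<P_0}n^{-1/2}\cdot O(1)\asymp P_0^{1/2}\asymp T^{1/4}$ (and in fact $\approx T^{1/4}\ln^2T$ with the Kusmin--Landau loss near $n\sim P_0$), which is larger than the claimed bound $T^{1/6+\epsilon}$ and too large for Lemma~2 and Theorem~1, whose main terms are only $\asymp H\ln T=T^{1/6+2\epsilon}\ln T$.

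The missing idea -- and the way the paper (following \cite{3}, \cite{5}, \cite{7}) actually proceeds -- is a second layer of cancellation in the $n$-variable. The inner alternating $\nu$-sum is not merely estimated but evaluated asymptotically: it telescopes to a few boundary terms plus an error $\mcal{O}(H^3\ln n/T)$ (formula (54')), and after the identity (42') those boundary terms carry phases oscillating in $n$ of the type $\sin(\tilde{T}\ln n)$ with $\tilde{T}\asymp T$. The remaining sum over $n$, of the shape $\sum_{n<P_0}n^{-1/2}(\cdots)\sin(\tilde{T}\ln n)$, is then itself treated as an exponential sum in $n$ by van der Corput's method, and it is this estimate -- the same machinery that yields $\zeta(1/2+it)\ll t^{1/6}$ -- that produces the exponent $1/6$; the restriction $H=T^{1/6+2\epsilon}$ (in particular $H\ll T^{1/3}$) is what keeps the accumulated error $\frac{H^3}{T}\sum_{n<P_0}n^{-1/2}\ln n$ negligible. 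So the saving must be extracted from oscillation in $n$ after the $\nu$-sum has been made explicit; a per-$n$ bound followed by the triangle inequality over $n$, as in your plan, cannot close the argument. (Your remaining technical points -- the near-constancy of $d t_\nu/d\tau$, and replacing the $t$-dependent cutoff $\sqrt{t/2\pi}$ by the fixed $P_0$ -- are fine and are handled the same way in the paper.)
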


\begin{proof}
Let us remind the formulae (see \cite{3}, (42); \cite{4}, (23))
\be \label{4.2}
\begin{split}
 & t_{\nu+1}-t_\nu=\frac{2\pi}{\ln\frac{T}{2\pi}}+\mcal{O}\left(\frac{H}{T\ln^2T}\right), \\
 & \sum_{T\leq t_\nu\leq T+H} 1=\frac{1}{2\pi}H\ln\frac{T}{2\pi}+\mcal{O}(1).
\end{split}
\ee
By the same way (comp. \cite{11}, p. 102; \cite{3}, (40) -- (42)) we obtain (see (\ref{1.1})) the formulae
\be \label{4.3}
\begin{split}
 & t_{\nu+1}(\tau)-t_\nu(\tau)=\frac{2\pi}{\ln\frac{T}{2\pi}}+\mcal{O}\left(\frac{H}{T\ln^2T}\right), \\
 & \sum_{T\leq t_\nu(\tau)\leq T+H} 1=\frac{1}{2\pi}H\ln\frac{T}{2\pi}+\mcal{O}(1).
\end{split}
\ee
Next, from (\ref{1.3}) we have
\bdis
\begin{split}
 & Z(t)=2\sum_{n<P_0}\frac{1}{\sqrt{n}}\cos\{ \vt-t\ln n\}+\mcal{O}(T^{-1/4}),\ P_0=\sqrt{\frac{T}{2\pi}}, \\
 & t\in [T,T+H]
\end{split}
\edis
and, consequently (see (\ref{1.1}))
\be \label{4.4}
\begin{split}
 & Z[t_\nu(\tau)]=2(-1)^\nu\cos\tau+ \\
 & +2(-1)^\nu\sum_{2\leq n<P_0}\frac{1}{\sqrt{n}}\cos\{ \tau-t_\nu(\tau)\ln n\}+\mcal{O}(T^{-1/4}).
\end{split}
\ee
Hence, (see (\ref{4.3}), (\ref{4.4}))
\be \label{4.5}
\begin{split}
 & \sum_{T\leq t_\nu(\tau)\leq T+H} Z[t_\nu(\tau)]= \\
 & = 2\cos\tau\sum_{2\leq n<P_0}\frac{1}{\sqrt{n}}\sum_{T\leq t_\nu(\tau)\leq T+H}(-1)^\nu\cos\{ t_\nu(\tau)\ln n\}+ \\
 & + 2\sin\tau \sum_{2\leq n<P_0}\frac{1}{\sqrt{n}}\sum_{T\leq t_\nu(\tau)\leq T+H}(-1)^\nu\cos\{ t_\nu(\tau)\ln n\}+\mcal{O}(\ln T)= \\
 & = 2w_1\cos\tau+2w_2\sin\tau+\mcal{O}(\ln T).
\end{split}
\ee
By the method of the papers \cite{3}, \cite{7} we obtain the estimate
\be \label{4.6}
w_1=\mcal{O}(T^{1/6+\epsilon}).
\ee
Next, instead of \cite{3}, (54) we have by \cite{5}, (66)
\bdis \label{54p}  \tag{54'}
\begin{split}
 & \sum_{T\leq t_\nu\leq T+H} (-1)^\nu\sin\{ t_\nu\ln n\}= \\
 & = \frac 12 (-1)^{\bar{\nu}}\sin\vp+\frac 12 (-1)^{N+\bar{\nu}}\sin(\omega N+\vp)-\frac 12(-1)^{\bar{\nu}}\tan\frac{\omega}{2}\cos\vp- \\
 & - \frac 12 (-1)^{N+\bar{\nu}}\tan\frac{\omega}{2}\cos(\omega N+\vp)+\mcal{O}\left( \frac{H^3\ln n}{T}\right),
\end{split}
\edis
and instead of \cite{7}, (42) we have
\bdis \label{42p}  \tag{42'}
\begin{split}
 & \cos\vp-\cos(\omega N+\vp)=2\sin\frac{\omega N}{2}\sin\left(\frac{\omega N}{2}+\vp\right)= \\
 & = -2\sin(Nx(n))\sin(\tilde{T}\ln n).
\end{split}
\edis
Then, by the method of the papers \cite{3},\cite{7} we obtain the estimate
\be \label{4.7}
w_2=\mcal{O}( T^{1/6+\epsilon}).
\ee
Hence, the estimate (\ref{4.1}) follows from (\ref{4.5}) by (\ref{4.6}), (\ref{4.7}).
\end{proof}

\section{Lemma 2}

Next, the following lemma holds true.

\begin{lemma}
\be \label{5.1}
\begin{split}
 & \sum_{T\leq t_\nu(\tau)\leq T+H}(-1)^\nu Z[t_\nu(\tau)]= \\
 & = \frac 1\pi H\ln\frac{T}{2\pi}\cos\tau+\mcal{O}(T^{1/6}\ln T),\quad \tau\in [-\pi,\pi].
\end{split}
\ee
\end{lemma}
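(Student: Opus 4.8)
The plan is to run the same calculation that gave Lemma 1, but now tracking the main term instead of discarding it.  Starting from the Riemann-Siegel expansion \eqref{4.4}, multiply both sides by $(-1)^\nu$; the factor $(-1)^\nu$ cancels the $(-1)^\nu$ inside, so that
\[
 (-1)^\nu Z[t_\nu(\tau)] = 2\cos\tau + 2\sum_{2\le n<P_0}\frac{1}{\sqrt n}\cos\{\tau-t_\nu(\tau)\ln n\}+\mcal{O}(T^{-1/4}).
\]
Summing over $T\le t_\nu(\tau)\le T+H$ and using the counting formula \eqref{4.3} for the diagonal term $2\cos\tau$ produces exactly $\frac{1}{\pi}H\ln\frac{T}{2\pi}\cos\tau+\mcal{O}(1)$, which is the claimed main term (note $2\cdot\frac{1}{2\pi}=\frac1\pi$).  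The total contribution of the $\mcal{O}(T^{-1/4})$ error, summed over the $\mcal{O}(H\ln T)$ values of $\nu$, is $\mcal{O}(HT^{-1/4}\ln T)=\mcal{O}(T^{1/6}\ln T)$ by \eqref{2.2}, which is absorbed into the stated remainder.

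What remains is to show that the off-diagonal piece
\[
 2\sum_{2\le n<P_0}\frac{1}{\sqrt n}\sum_{T\le t_\nu(\tau)\le T+H}\cos\{\tau-t_\nu(\tau)\ln n\}
 = 2\cos\tau\, w_1 + 2\sin\tau\, w_2
\]
is $\mcal{O}(T^{1/6}\ln T)$, where $w_1,w_2$ are precisely the two exponential sums already estimated in the proof of Lemma 1.  But there the estimates $w_1=\mcal{O}(T^{1/6+\epsilon})$ in \eqref{4.6} and $w_2=\mcal{O}(T^{1/6+\epsilon})$ in \eqref{4.7} were obtained (via the auxiliary identities \eqref{54p}, \eqref{42p} and the method of \cite{3}, \cite{7}); so the off-diagonal piece here is no larger than the error term already present in Lemma 1, hence certainly $\mcal{O}(T^{1/6}\ln T)$.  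Combining the diagonal main term with this off-diagonal bound yields \eqref{5.1}.

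The main obstacle — or rather the only nontrivial analytic input — is the bound on the twisted exponential sums $w_1$ and $w_2$ over the Gram-type points $t_\nu(\tau)$.  Since the present lemma invokes exactly the same sums that appear in Lemma 1, the real work has already been done there; the proof of Lemma 2 is essentially a bookkeeping step that separates out the $n=1$ (diagonal) term, which Lemma 1 threw away because it only claimed an upper bound, and which Lemma 2 now must retain and evaluate asymptotically.  One should be slightly careful that the saving in the remainder (from $T^{1/6+\epsilon}$ to $T^{1/6}\ln T$) is genuine: it comes from the fact that after extracting the $n=1$ term, the surviving sum over $2\le n<P_0$ of $n^{-1/2}$ contributes only a $\mcal{O}(\ln T)$ factor rather than forcing an additional $T^{\epsilon}$, provided the per-$n$ bounds from \eqref{54p}, \eqref{42p} are uniform enough in $n$; verifying that uniformity is the one place where a little attention is needed.
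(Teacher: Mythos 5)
Your bookkeeping of the diagonal term is exactly what the paper does: multiply \eqref{4.4} by $(-1)^\nu$, sum, and evaluate the $n=1$ contribution $2\cos\tau$ by the counting formula \eqref{4.3}, giving $\frac1\pi H\ln\frac{T}{2\pi}\cos\tau$. The gap is in your treatment of the off-diagonal piece. After multiplication by $(-1)^\nu$ the alternating sign cancels \emph{inside the sum as well}, so the inner sums you are left with are
\begin{displaymath}
\sum_{T\leq t_\nu(\tau)\leq T+H}\cos\{ t_\nu(\tau)\ln n\},\qquad
\sum_{T\leq t_\nu(\tau)\leq T+H}\sin\{ t_\nu(\tau)\ln n\},
\end{displaymath}
i.e.\ sums \emph{without} the factor $(-1)^\nu$. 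These are not the sums $w_1,w_2$ of \eqref{4.5}--\eqref{4.7}, which carry the alternating sign; they are the different sums ($w_3,w_4$ in the paper's \eqref{5.2}), and their estimation is precisely the nontrivial content of Lemma 2. The paper bounds them by the methods of \cite{4} and \cite{6}, not by the method of \cite{3}, \cite{7} used for $w_1,w_2$; the auxiliary identities (54') and (42') you invoke are tailored to the alternating case (they exploit the $\pi$-step of $\vt$ at consecutive $t_\nu$, which produces the factor $(-1)^\nu$), and they say nothing about $\sum_\nu e^{it_\nu(\tau)\ln n}$. So the claim that ``the real work has already been done'' in Lemma 1 is false: your proof supplies no estimate at all for the exponential sums that actually occur.

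Even setting that aside, your final inference is backwards: a bound $\mcal{O}(T^{1/6+\epsilon})$ does not imply $\mcal{O}(T^{1/6}\ln T)$, since $T^{1/6+\epsilon}$ is the larger quantity. Hence ``no larger than the error term already present in Lemma 1, hence certainly $\mcal{O}(T^{1/6}\ln T)$'' is a non sequitur; at best your argument, if the sums were the right ones, would yield \eqref{5.1} with the weaker remainder $\mcal{O}(T^{1/6+\epsilon})$, which is not the statement of the lemma. To repair the proof you must estimate the non-alternating sums $w_3,w_4$ directly (as in \cite{4}, \cite{6}) with a bound of size $\mcal{O}(T^{1/6}\ln T)$, and only then combine with the diagonal term.
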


\begin{remark}
The formula (\ref{5.1}) is the asymptotic formula for
\bdis
H=T^{1/6+2\epsilon},\quad \tau\in [-\pi,\pi],\ \tau\not=-\frac{\pi}{2},\frac{\pi}{2}.
\edis
\end{remark}

\begin{proof}
We have the following formula from (\ref{4.4})
\be \label{5.2}
\begin{split}
 & \sum_{T\leq t_\nu(\tau)\leq T+H} (-1)^\nu Z[t_\nu(\tau)]= \frac 1\pi H\ln\frac{H}{2\pi}\cos\tau +\\
 & + 2\cos\tau\sum_{2\leq n<P_0}\frac{1}{\sqrt{n}}\sum_{T\leq t_\nu(\tau)\leq T+H} \cos\{ t_\nu(\tau)\ln n\}+ \\
 & + 2\sin\tau\sum_{2\leq n<P_0}\frac{1}{\sqrt{n}}\sum_{T\leq t_\nu(\tau)\leq T+H} \sin\{ t_\nu(\tau)\ln n\}+\mcal{O}(\ln T)= \\
 & = \frac 1\pi H\ln\frac{T}{2\pi}\cos\tau+2w_3\cos\tau+2w_4\sin\tau+\mcal{O}(\ln T)= \\
 & = \frac 1\pi H\ln\frac{T}{2\pi}\cos\tau+\mcal{O}(T^{1/6}\ln T),
\end{split}
\ee
where the estimates for the sums $w_3,w_4$ were obtained by the methods of papers \cite{4},\cite{6}.
\end{proof}

\section{Proof of Theorem 1}

First of all, we have the following formulae (see (\ref{4.1}), (\ref{5.1}))
\be \label{6.1}
\begin{split}
 & \sum_{T\leq t_{2\nu}(\tau)\leq T+H} Z[t_{2\nu}(\tau)]=\frac{1}{2\pi}H\ln\frac{T}{2\pi}\cos\tau+\mcal{O}(T^{1/6+\epsilon}), \\
 & \sum_{T\leq t_{2\nu+1}(\tau)\leq T+H} Z[t_{2\nu+1}(\tau)]=-\frac{1}{2\pi}H\ln\frac{T}{2\pi}\cos\tau+\mcal{O}(T^{1/6+\epsilon}).
\end{split}
\ee
Since (see \cite{11}, p. 100)
\bdis
\vt'(t)=\frac 12\ln\frac{t}{2\pi}+\mcal{O}\left(\frac 1t\right),
\edis
we obtain from (\ref{1.1}) that
\bdis
\left(\frac{{\rm d}t_\nu(\tau)}{{\rm d}\tau}\right)^{-1}=\ln P_0+\mcal{O}\left(\frac HT\right),\quad
t_\nu(\tau)\in [T,T+H].
\edis
Next (see (\ref{2.6}))
\be \label{6.2}
\begin{split}
 & \int_{-x}^x Z[t_{2\nu}(\tau)]{\rm d}\tau=\int_{-x}^x Z[t_{2\nu}(\tau)]\frac{{\rm d}t_{2\nu}(\tau)}{{\rm d}\tau}
 \left(\frac{{\rm d}t_{2\nu}(\tau)}{{\rm d}\tau}\right)^{-1}= \\
 & = \ln P_0\int_{t_{2\nu}(-x)}^{t_{2\nu}(x)} Z(t){\rm d}t+\mcal{O}(HT^{-5/6}),
\end{split}
\ee
where
\be \label{6.3}
Z(t)=\mcal{O}(t^{1/6}\ln t),\quad t\to\infty.
\ee
Of course, (see (\ref{4.2}), (\ref{4.3}) and (\ref{6.3}))
\be \label{6.4}
\begin{split}
 & \sum_{T\leq t_{2\nu}(\tau)\leq T+H} Z[t_{2\nu}(\tau)]=\sum_{T\leq t_{2\nu}\leq T+H} Z[t_{2\nu}(\tau)]+\mcal{O}(T^{1/6}\ln T).
\end{split}
\ee
Finally, by integration (see (\ref{6.2})) of the first formula in (\ref{6.1}) (after the transformation (\ref{6.4})) we obtain the first formula
in (\ref{2.3}). The second formula in (\ref{2.3}) can be obtained by the similar way.

\section{Proof of Theorem 2}

The following holds true (see (\ref{2.2}), (\ref{2.9}), (\ref{3.1}))
\be \label{7.1}
\begin{split}
 & \int_{G_1(x)\cup G_2(x)}Z(t){\rm d}t=\int_{G_1^+(x)\cup G_1^-(x)\cup G_2^+(x)\cup G_2^-(x)} Z(t){\rm d}t= \\
 & = \int_{G_1^+(x)\cup G_2^+(x)} Z(t){\rm d}t+\int_{G_1^-(x)\cup G_2^-(x)} Z(t){\rm d}t= \\
 & = \mcal{O}(xT^{1/6+\epsilon})=o(xH).
\end{split}
\ee
Since (see (\ref{2.3}))
\bdis
\int_{G_1(x)} Z(t){\rm d}t>\left(\frac 2\pi\sin x-\epsilon\right)H=A(x,\epsilon)H,\quad 0<\epsilon<\frac 1\pi\sin x
\edis
and
\bdis \begin{split}
& \int_{G_1(x)} Z(t){\rm d}t=\int_{G_1^+(x)\cup G_1^-(x)} Z(t){\rm d}t\leq \int_{G_1^+(x)} Z(t){\rm d}t\leq \\
& \leq \int_{G_1^+(x)\cup G_2^+(x)}Z(t){\rm d}t,
\end{split}
\edis
we have the following inequality
\be \label{7.2}
\int_{G_1^+(x)\cup G_2^+(x)} Z(t){\rm d}t> A(x,\epsilon)H
\ee
and, by the similar way, we have
\be \label{7.3}
-\int_{G_1^-(x)\cup G_2^-(x)} Z(t){\rm d}t >B(x,\epsilon)H.
\ee
Hence, the equality (see (\ref{7.1}))
\bdis
\begin{split}
 & \int_{G_1^+(x)\cup G_2^+(x)} Z(t){\rm d}t= \\
 & =-\int_{G_1^-(x)\cup G_2^-(x)} Z(t){\rm d}t+o(xH),\quad x\in \left(\left. 0,\frac{\pi}{2}\right]\right.
\end{split}
\edis
is the asymptotic equality (\ref{3.2}) by (\ref{7.2}) and (\ref{7.3}).

\appendix

\section{Jacob's ladders and new third-order formulae corresponding to (\ref{2.8}) and (\ref{3.2})}

In our paper \cite{13}, (9.2), (9.5) we have proved the following lemma: if
\bdis
\vp_1\{ [\mT,\widering{T+U}]\}=[T,T+U],
\edis
then for every Lebesgue-integrable function
\bdis
f(x),\ x\in [T,T+U]
\edis
we have
\be \label{A1}
\begin{split}
 & \int_{\mT}^{\widering{T+U}}f[\vp_1(t)]\tilde{Z}^2(t){\rm d}t=\int_T^{T+U} f(x){\rm d}x, \\
 & T\geq T_0[\vp_1],\ U\in\left(\left. 0,\frac{T}{\ln T}\right.\right],
\end{split}
\ee
where
\be \label{A2}
\begin{split}
 & \tilde{Z}^2(t)=\frac{Z^2(t)}{\left\{ 1+\mcal{O}\left( \frac{\ln\ln t}{\ln t}\right)\right\}\ln t}=\omega(t)Z^2(t); \\
 & \omega(t)=\frac{1}{\left\{ 1+\mcal{O}\left( \frac{\ln\ln t}{\ln t}\right)\right\}\ln t}=\frac{1}{\ln t}
 \left\{1+\mcal{O}\left( \frac{\ln\ln t}{\ln t}\right)\right\},
\end{split}
\ee
and $\vp_1(t)$ is a fixed Jacob's ladder. Consequently we have (see (\ref{A1}), (\ref{A2}))
\be \label{A3}
\int_{\mT}^{\widering{T+U}} \omega(t)f[\vp_1(t)]Z^2(t){\rm d}t=\int_T^{T+U} f(x){\rm d}x,\quad U\in\left(\left. 0,\frac{T}{\ln T}\right.\right].
\ee
Now, we obtain from (\ref{2.8}), (\ref{3.2}) by (\ref{A3}) the following third-order formulae
\bdis \tag{2.8'}
\begin{split}
 & \frac{1}{m\{ G_1(x)\}}\int_{\mathring{G}_1(x)}\omega(t)Z[\vp_1(t)]Z^2(t){\rm d}t\sim 2\frac{\sin x}{x}, \\
 & \frac{1}{m\{ G_2(y)\}}\int_{\mathring{G}_2(y)}\omega(t)Z[\vp_1(t)]Z^2(t){\rm d}t\sim -2\frac{\sin y}{y},
\end{split}
\edis
and
\bdis \tag{3.2'}
\begin{split}
 & \int_{\mathring{G}_1^+(x)\cup \mathring{G}_2^+(x)}\omega(t)Z[\vp_1(t)]Z^2(t){\rm d}t\sim \\
 & \sim -\int_{\mathring{G}_1^-(x)\cup\mathring{G}_2^-(x)}\omega(t)Z[\vp_1(t)]Z^2(t){\rm d}t,\quad T\to\infty.
\end{split}
\edis

\thanks{I would like to thank Michal Demetrian for helping me with the electronic version of this work.}

\end{document}